\newtheorem{theorem}{Theorem}
\newtheorem{proposition}[theorem]{Proposition}
\theoremstyle{definition}
\newcommand{\mfo}{\mathfrak{o}}
\newcommand*{\Rom}[1]{\expandafter\@slowromancap\romannumeral #1@}
\begin{document}

\title{On a Kostant Section for the unitary group}

\subjclass[2020]{MSC22E35, 	20G25}

\author[Yuchan Lee]{Yuchan Lee}
\thanks{The author is supported by  Samsung Science and Technology Foundation under Project Number SSTF-BA2001-04.}
\keywords{Kostant Section, unitary group}

\address{Yuchan Lee \\  Department of Mathematics, POSTECH, 77, Cheongam-ro, Nam-gu, Pohang-si, Gyeongsangbuk-do, 37673, KOREA}

\email{yuchanlee329@gmail.com}

\begin{abstract}
For the unitary group defined over the ring of integers in a non-Archimedean local field, we give a correction for a Kostant section provided in G. Laumon and B.C. Ng\^o's paper; Le lemme fondamental pour les groupes unitaires. 
\end{abstract}

\maketitle
\section{Introduction}

Let $F$ be a non-Archimedian local field and $\mfo$ be its ring of integers with $\kappa$ its residue field.
Let $\mathrm{G}$ be a connected reductive group over $\mfo$ with Lie algebra $\mathfrak{g}$.
For the geometric invariant theory (GIT) quotient $\mathfrak{g}\sslash\mathrm{G}$ of $\mathfrak{g}$, we consider the adjoint quotient map
 \[\varphi_n: \mathfrak{g}\rightarrow \mathfrak{g}\sslash \mathrm{G}.\]
Kostant, in \cite{Ko}, provided a section to $\varphi_n$ over an algebraically closed field of characteristic 0. The section is named by a Kostant section for $\mathrm{G}$.
In \cite[Section 2.3]{AFV},  they presented a Kostant section over a ring $R$ for a connected split reductive group and they outlined the assumption on 
$R$ for each case in Remark 8 in loc.cit.
On the other hand, for a quasi-split reductive group, A. Bouthier and K. Česnavičius  proved the existence of a Kostant section under the restrictions on the characteristic of the residue field in \cite[Proposition 4.3.2]{BV}.

In the case of the unitary Lie algebra $\mathfrak{u}_n$, the quotient map $\varphi_n$ is precisely a morphism which maps an element in $\mathfrak{u}_n$ to its coefficients of the characteristic polynomial.
The result in \cite[Proposition 4.3.2]{BV} ensures the existence of a Kostant section for the unitary group if $\mathrm{char}(\kappa)$ does not divide the rank $n$ of the unitary group.
On the other hand, if $\mathrm{char}(\kappa)\neq 2$, then the existence of  a Kostant section for the unitary group over $\mathfrak{o}$ is derived in  \cite[Section 2.3]{LN}.
More precisely, 
they provide the description of a Kostant section over $\mfo$, as a matrix in page 489 of loc.cit.
However, we have found that this matrix is not contained in the unitary Lie algebra.

In this manuscript, we give  a correction to the construction of \cite[Section 2.3]{LN} in Proposition \ref{Ngo_Laumon_correction}. Combining it with the result of \cite{BV},  we will state a general result about the existence of a Kostant section for $\mathfrak{u}_n$ in Theorem \ref{main_thm}. 
This will be used in my joint work in \cite{CKL}.
\\

\textbf{Acknowledgments.} The author sincerely thanks Alexis Bouthier, Kęstutis Česnavičius, Sungmun Cho, and Taeyeoup Kang to provide helpful comments and encouragement. 
\section{Notation}
\begin{itemize}
    \item 
Let $F$ be a non-Archimedean local field with its ring of integers $\mfo$ and a uniformizer $\pi$. Let $\kappa$ be the residue field of $\mfo$ and $\bar{\kappa}$ be the algebraically closure of $\kappa$.

    \item
Let $E$ be a quadratic unramified field extension over $F$ with its ring of integers $\mfo_E$ and let $\sigma$ be the non-trivial element in $\mathrm{Gal}(E/F)$.
 \item
 Let $L$ be a free $\mfo_E$-module of rank $n$. We fix a hermitian form $h:L\times L \rightarrow \mfo_E$ whose Gram matrix is given by
 \begin{equation}\label{hermitian}
 \Phi_n:=
 \begin{pmatrix}
     0&\cdots&1\\
     \vdots&\iddots&\vdots\\
     1&\cdots&0
 \end{pmatrix}.
 \end{equation}
 \item 
For unitary Lie algebra $\mathfrak{u}_n$, we define the following map over $\mfo$ by
    \[
    \varphi_n:\mathfrak{u}_{n}(\mfo)\rightarrow
    \bigoplus_{i=1}^n\{a\in\mfo_E\mid \sigma(a)=(-1)^i a\},
    \gamma \mapsto (\textit{coefficient of $\chi_\gamma$})
    \]
    where $\chi_\gamma$ is the characteristic polynomial of $\gamma$.
\end{itemize}

\section{Kostant section for $\mathfrak{u}_n$}
    
The existence of the Kostant section for $\varphi_n$ is explained in \cite[Section 2.3]{LN} when the characteristic of $F$ is not $2$.
However, the matrix in page 489 of loc. cit. is not an element of the unitary Lie algebra, whereas its characteristic polynomial is correctly stated.
The following matrix suggests a correction to  \cite[Section 2.3]{LN}. 
\begin{proposition}\label{Ngo_Laumon_correction}
    If $\mathrm{char}(\kappa)\neq 2$, then the following matrix $X$ yields a Kostant section over $\mfo$; 
    \[
    X=\begin{pmatrix}
        -b_1 &-\alpha^{-1}b_2&\cdots&\cdots&-\alpha^{-(n-2)}b_{n-1}&-2\alpha^{-(n-1)}b_n\\
        \alpha&0&\cdots&\cdots &0&-\alpha^{-(n-2)}b_{n-1}\\
        0&\ddots&\ddots&&\vdots&\vdots\\
        \vdots&\ddots&\ddots&\ddots&\vdots&\vdots\\
        \vdots&&\ddots&\ddots&0&-\alpha^{-1}b_2\\
        0&\cdots&\cdots&0&\alpha&-b_1
    \end{pmatrix}
    \]
where $\alpha\in \mfo_E^{\times}$ such that $\alpha+\sigma(\alpha)=0$ and $b_i$ are coefficients defined in \cite[Section 2.3]{LN}.
\end{proposition}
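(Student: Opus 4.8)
The plan is to establish two things: that $X$ lies in $\mathfrak{u}_n(\mfo)$, and that its characteristic polynomial is the one recorded in \cite[Section 2.3]{LN}. Together these say that the assignment sending the invariants to $X$ is a section of $\varphi_n$ taking values in the Lie algebra, i.e. a Kostant section. Throughout I would use that, with the Gram matrix $\Phi_n$, membership $Y\in\mathfrak{u}_n$ is equivalent to $\Phi_n\,\sigma(Y)^{\mathrm{t}}\,\Phi_n=-Y$; since $\Phi_n$ is the anti-diagonal permutation with $\Phi_n^2=\mathrm{Id}$, this reads entrywise as
\[
Y_{ij}=-\,\sigma\!\left(Y_{\,n+1-j,\,n+1-i}\right)\qquad(1\le i,j\le n).
\]
I would also use the arithmetic of the parameters: $\sigma(\alpha)=-\alpha$ with $\alpha\in\mfo_E^{\times}$, and $\sigma(b_i)=(-1)^i b_i$, the latter because $b_i$ is the $i$-th coordinate of $\bigoplus_i\{a\in\mfo_E\mid\sigma(a)=(-1)^i a\}$.

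First I would verify $X\in\mathfrak{u}_n(\mfo)$. Integrality is immediate, since every entry is a product of some $b_i$ with a power of $\alpha$ and $\alpha\in\mfo_E^{\times}$ forces $\alpha^{-1}\in\mfo_E$. For the skew-Hermitian identity I would run through the orbits of the involution $(i,j)\mapsto(n+1-j,n+1-i)$ on entries. The subdiagonal is stable and yields the single requirement $\alpha+\sigma(\alpha)=0$; the first row and the last column are interchanged, and matching $X_{1j}=-\alpha^{-(j-1)}b_j$ against $-\sigma(X_{\,n+1-j,\,n})$ collapses, via $\sigma(\alpha)=-\alpha$, to exactly $\sigma(b_j)=(-1)^j b_j$; the two diagonal corners pair with one another and invoke $\sigma(b_1)=-b_1$. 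The only entries fixed by the involution lie on the anti-diagonal, where the condition is merely that the entry be anti-invariant under $\sigma$; this holds for $X_{1n}=-2\alpha^{-(n-1)}b_n$ irrespective of the factor $2$. Supplying the full last column (together with the value $-b_1$ in the bottom-right corner) is precisely the correction that restores the skew-Hermitian symmetry absent from the matrix of \cite{LN}.

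The main work is the characteristic polynomial, and the key observation is that $X$ is a rank-one modification of a companion matrix. Writing $C$ for the companion matrix with first row $(-b_1,-\alpha^{-1}b_2,\dots,-\alpha^{-(n-1)}b_n)$ and subdiagonal $\alpha$, one has $X=C+v\,e_n^{\mathrm{t}}$, where $e_n$ is the last standard basis vector and $v_i=-\alpha^{-(n-i)}b_{n-i+1}$; here the doubled top-right entry is exactly what matches $X_{1n}-C_{1n}=v_1$, so that the whole difference $X-C$ sits in the last column. A diagonal normalization gives $\det(t\,\mathrm{Id}-C)=P_n(t)$ with $P_k(t):=t^k+b_1 t^{k-1}+\dots+b_k$, and the same bookkeeping identifies the last row of $\mathrm{adj}(t\,\mathrm{Id}-C)$ as $\left(\alpha^{\,n-i}P_{i-1}(t)\right)_i$. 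The matrix determinant lemma $\det(t\,\mathrm{Id}-X)=\det(t\,\mathrm{Id}-C)-e_n^{\mathrm{t}}\,\mathrm{adj}(t\,\mathrm{Id}-C)\,v$ then gives, after the powers of $\alpha$ cancel,
\[
\det(t\,\mathrm{Id}-X)=\sum_{k=0}^{n}b_{n-k}\,P_k(t),\qquad b_0:=1,
\]
which one then checks agrees with the characteristic polynomial recorded in \cite[Section 2.3]{LN}.

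I expect this characteristic-polynomial step to be the main obstacle: one must pin down the adjugate of the companion matrix correctly and confirm that the factor $2$ in the corner produces exactly the symmetric contribution of the top-right and bottom-right corners, so that the outcome matches \cite{LN} on the nose. Finally I would record that the coefficient of $t^{\,n-m}$ in the displayed polynomial equals $2b_m$ plus a polynomial in $b_1,\dots,b_{m-1}$, so the passage between the $b_i$ and the coordinates of $\varphi_n(X)$ is triangular with $2$ on the diagonal; this transition is invertible precisely because $\mathrm{char}(\kappa)\neq 2$, which is where that hypothesis enters, and it certifies that $X$ gives a section.
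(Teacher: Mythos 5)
Your proposal is correct, and on one of the two halves it genuinely departs from the paper. The verification that $X\in\mathfrak{u}_n(\mfo)$ is essentially the paper's argument: both of you reduce the skew-Hermitian condition to the entrywise identity $X_{i,j}+\sigma(X_{n+1-j,\,n+1-i})=0$ and check it orbit by orbit under the anti-diagonal reflection, using $\sigma(\alpha)=-\alpha$ and $\sigma(b_k)=(-1)^k b_k$. The divergence is in the characteristic polynomial. The paper computes nothing: it conjugates $X$ by $D_{\alpha,n}=\mathrm{diag}(1,\alpha,\dots,\alpha^{n-1})$ to recover exactly the matrix printed in \cite[Section 2.3]{LN} and then cites that reference for its characteristic polynomial. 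You instead realize $X$ as a rank-one perturbation $C+v\,e_n^{\mathrm{t}}$ of a companion matrix and apply the matrix determinant lemma, arriving at $\det(tI-X)=\sum_{k=0}^{n}b_{n-k}P_k(t)$ with $P_k(t)=t^k+b_1t^{k-1}+\cdots+b_k$; I checked this formula for small $n$ and the derivation (including the last row of the adjugate of $tI-C$) is sound. Your route costs more computation but is self-contained, and it exposes directly that the coefficient of $t^{n-m}$ equals $2b_m$ plus a polynomial in $b_1,\dots,b_{m-1}$, so the transition $b\mapsto a$ is triangular with $2$ on the diagonal and invertible precisely when $2\in\mfo^\times$. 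This localization of the hypothesis $\mathrm{char}(\kappa)\neq 2$ is in fact cleaner than the paper's, which attaches that hypothesis to the membership check (where it plays no role) and leaves the $a\leftrightarrow b$ inversion entirely to \cite{LN}. Two small points you gloss over: the existence of a unit $\alpha$ with $\alpha+\sigma(\alpha)=0$, which the paper establishes by taking a $(-1)$-eigenvector of $\sigma$ and rescaling by a power of $\pi$ (using that $E/F$ is unramified); and the fact that the $b_i$ obtained by inverting your triangular system still satisfy $\sigma(b_i)=(-1)^ib_i$, which follows by induction because the lower-order terms in the coefficient of $t^{n-m}$ are sums of monomials $b_{i_1}\cdots b_{i_r}$ with $i_1+\cdots+i_r=m$. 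Neither is a gap in substance, but both belong in a complete write-up.
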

\begin{proof}
First at all, the existence of such an element $\alpha \in \mfo_{E}^{\times}$ is verified as follows.
Since the $F$-linear map $\sigma$ is the non-trivial involution, there does exist an eigenvector with eigenvalue $-1$.
Then multiplying this eigenvector by a suitable power of $\pi$, we obtain a desired element $\alpha\in \mfo_E^{\times}$.

We fix a polynomial $x^{n}+a_1x^{n-1}+\cdots+a_n\in \mfo_E[x]$ such that $\sigma(a_i)=(-1)^ia_i$.  We consider the diagonal matrix \[D_{\alpha,n}:=\begin{pmatrix}
    1&0&\cdots&0\\
    0&\alpha&\cdots&0\\
    \vdots&\vdots&\ddots&\vdots\\
    0&0&\cdots&\alpha^{n-1}
\end{pmatrix}\text{. We then have }D_{\alpha,n}^{-1} XD_{\alpha,n}
=\begin{pmatrix}
        -b_1 &-b_2&\cdots&\cdots&-b_{n-1}&-2b_n\\
        1&0&\cdots&\cdots &0&-b_{n-1}\\
        0&\ddots&\ddots&&\vdots&\vdots\\
        \vdots&\ddots&\ddots&\ddots&\vdots&\vdots\\
        \vdots&&\ddots&\ddots&0&-b_2\\
        0&\cdots&\cdots&0&1&-b_1
    \end{pmatrix}.
\] The characteristic polynomial of $D_{\alpha,n}^{-1}XD_{\alpha,n}$  is the same as that of $X$ which is $x^{n}+a_1x^{n-1}+\cdots+a_n$ according to \cite[Section 2.3]{LN}.

    Therefore it suffices to show that the matrix $X$ is contained in $\mathfrak{u}_n(\mfo)$.
    By the construction of $b_i$ in \cite[Section 2.3]{LN}, we have that $\sigma(b_i)=(-1)^i b_i$ and that $\sigma(\alpha^{i-1}b_i)=(-1)^{i-1}\cdot(-1)^i\cdot\alpha^{i-1} b_i=-\alpha^{i-1} b_i$.
    On the other hand, we have \[(\Phi_n X +\sigma({}^tX)\Phi_n)_{i,j}=X_{n+1-i,j}+\sigma(X_{n+1-j,i}).\]
    Here, $\Phi_n$ is given in Equation (\ref{hermitian}).
    By putting $i'=n+1-i$, it suffices to verify that
    \[
    X_{i',j}+\sigma(X_{n+1-j,n+1-i'})=0,
    \]for $1\leq i',j\leq n$.
    We note that $X_{i,j}=\left\{\begin{array}{c l}
       -\alpha^{-(k-1)}b_k  & (i,j)= (1,k) \textit{ or }(n+1-k,n) \textit{ for $1\leq k\leq n-1$}; \\
        -2\alpha^{-(n-1)}b_n &  (i,j)=(1,n); \\
         \alpha&(i,j)=(k+1,k) \textit{ for $1\leq k\leq n-1$};\\
         0 & \textit{otherwise}.
    \end{array}\right.$
\begin{enumerate}
    \item In the case that $(i',j)= (1,k) \textit{ or }(n+1-k,n) \textit{ for $1\leq k\leq n-1$}$, we have
    \begin{align*}
        &X_{1,k}+\sigma(X_{n+1-k,n})=-\alpha^{-(k-1)}b_k+\sigma(-\alpha^{-(k-1)}b_k)=0\textit{ or};\\
        &X_{n+1-k,n}+\sigma(X_{1,k})=-\alpha^{-(k-1)}b_k+\sigma(-\alpha^{-(k-1)}b_k)=0.
    \end{align*}
    \item In the case that $(i',j)=(1,n)$, we have
    \[
    X_{1,n}+\sigma(X_{1,n})=-2\alpha^{-(n-1)}b_n+\sigma(-2\alpha^{-(n-1)}b_n)=0.
    \]
    \item In the case that $(i',j)=(k+1,k)$\textit{ for $1\leq k\leq n-1$}, we have
    \[
    X_{k+1,k}+\sigma(X_{n-k+1,n-k})=\alpha+\sigma(\alpha)=0.
    \]
    \item Otherwise, by the above computation, we can deduce that if $X_{i',j}=0$, then $X_{n+1-j,n+1-i'}=0$.
\end{enumerate}
Therefore, we conclude that $X\in \mathfrak{u}_n(\mfo)$ if $\mathrm{char}(\kappa)\neq 2$.
\end{proof}

\begin{theorem}\label{main_thm}
    The morphism $\varphi_n$ admits a Kostant section $\left\{\begin{array}{l l}
         \textit{over $\mfo$} &\textit{if $n$ is odd};\\
         \textit{over $\mfo$ with $\mathrm{char}(\kappa)\neq 2$} & \textit{if $n$ is even}.
    \end{array}\right.$
\end{theorem}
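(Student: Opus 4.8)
The plan is to deduce the theorem as a combination of the two existence inputs already available, organized by a case analysis on the parity of $n$ together with the residue characteristic of $\kappa$. The two inputs are: the explicit construction of Proposition \ref{Ngo_Laumon_correction}, which produces a Kostant section for $\varphi_n$ whenever $\mathrm{char}(\kappa)\neq 2$; and the result of \cite[Proposition 4.3.2]{BV}, which guarantees a Kostant section whenever $\mathrm{char}(\kappa)$ does not divide the rank $n$. The whole content of the theorem is to observe that these two hypotheses jointly cover exactly the ranges asserted.

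First I would dispose of the even case. There the statement only asserts existence under the hypothesis $\mathrm{char}(\kappa)\neq 2$, which is precisely the conclusion of Proposition \ref{Ngo_Laumon_correction}; so nothing further is needed. Next I would treat the odd case, where the claim is existence over $\mfo$ with no restriction on the residue characteristic, and I would split according to whether $\mathrm{char}(\kappa)=2$. If $\mathrm{char}(\kappa)\neq 2$, Proposition \ref{Ngo_Laumon_correction} applies verbatim. If $\mathrm{char}(\kappa)=2$, then since $n$ is odd we have $2\nmid n$, hence $\mathrm{char}(\kappa)\nmid n$, and \cite[Proposition 4.3.2]{BV} supplies the section. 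These two subcases exhaust all residue characteristics, giving existence over $\mfo$ unconditionally when $n$ is odd. (It is also worth remarking that the remaining corner, $n$ even with $\mathrm{char}(\kappa)=2$, is covered by neither tool, since $2\mid n$ there, which is why the theorem excludes it.)

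The argument is essentially bookkeeping of which hypothesis each cited result demands, so I do not expect a genuine analytic obstacle. The one point requiring care, and the step I would flag as the main thing to verify, is that the hypotheses of \cite[Proposition 4.3.2]{BV} are met in our setting: that result is stated for quasi-split reductive groups, so I would check that the unitary group attached to $(L,h)$ over $\mfo$ is quasi-split. This holds because $E/F$ is unramified and the hermitian form $h$ has the anti-diagonal Gram matrix $\Phi_n$ of Equation (\ref{hermitian}), which is split and admits a maximal isotropic $\mfo_E$-submodule; hence the associated unitary group is quasi-split and the cited existence theorem applies. Once this compatibility is confirmed, the case analysis above completes the proof.
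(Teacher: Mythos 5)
Your proposal is correct and follows essentially the same route as the paper: combine Proposition \ref{Ngo_Laumon_correction} (covering $\mathrm{char}(\kappa)\neq 2$) with \cite[Proposition 4.3.2]{BV} (covering $\mathrm{char}(\kappa)\nmid n$), and observe that for odd $n$ the two hypotheses exhaust all residue characteristics. The only substantive difference is where the verification effort goes: you flag quasi-splitness of $\mathrm{U}_n$ as the point to check, whereas the paper instead spends its proof reducing the hypothesis of \cite[Proposition 4.3.2]{BV} to the condition $\mathrm{char}(\kappa)\nmid n$, by noting $\mathrm{U}_{n,\bar{\kappa}}\cong\mathrm{GL}_{n,\bar{\kappa}}$, that the root coefficients are $\pm 1$, and that $\#\pi_1(((\mathrm{U}_{n,\bar{\kappa}})_{\mathrm{der}})^{\mathrm{ad}})=n$ since $\mathrm{SL}_{n,\bar{\kappa}}$ is simply connected --- a step you cite (via the introduction's paraphrase) rather than derive, and which a complete write-up should include.
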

\begin{proof}
According to \cite{BV}[Proposition 4.3.2], for $\mathfrak{u}_n$, the morphism $\varphi_n$ admits a Kostant section if $\mathrm{char}(\kappa)$ divides neither $\#\pi_1(((\mathrm{U}_{n,\bar{\kappa}})_{\mathrm{der}})^{\mathrm{ad}})$ nor any coefficient in the expression of a root of $\mathrm{U}_{n,\bar{\kappa}}$ in terms of a base of simple roots.
Since $\mathrm{U}_{n,\bar{\kappa}}\cong \mathrm{GL}_{n,\bar{\kappa}}$, we can deduce that the coefficients in the second condition is $1$ or $-1$.
It then suffices to consider $\#\pi_1(((\mathrm{U}_{n,\bar{\kappa}})_{\mathrm{der}})^{\mathrm{ad}})$. Since $\mathrm{SL}_{n,\bar{\kappa}}$ is the derived subgroup of $\mathrm{GL}_{n,\bar{\kappa}}$ and it is simply connected, we have
\[
\#\pi_1(((\mathrm{U}_{n,\bar{\kappa}})_{\mathrm{der}})^{\mathrm{ad}})=n.
\]
By combining the result in Proposition \ref{Ngo_Laumon_correction}, we conclude that the morphism $\varphi_n$ admits a Kostant section over any $\mfo$ in the case that $n$ is odd. 
\end{proof}

\end{document}